\newtheorem{theorem}{Theorem}
\newtheorem{conjecture}{Conjecture}
\newtheorem{proposition}{Proposition}
\newtheorem{claim}{Claim}
\newtheorem{corollary}{Corollary}
\tikzstyle{vertex}=[circle, draw, inner sep=2pt, minimum size=6pt]
\tikzstyle{filledvertex}=[circle, draw, fill, inner sep=2pt, minimum size=6pt]
\tikzstyle{directed}=[postaction={decorate,
\newcommand{\vertex}{\node[vertex]}
\begin{document}
\begin{spacing}{1.10}

\title{Connected $k$-factors in bipartite graphs
\thanks{The first author is supported by NSFC (Nos. 11601430 and 11671320)
and China Postdoctoral Science Foundation (No. 2016M590969), and the
second author is supported by NSFC (No. 11601429) and the
Fundamental Research Funds for the Central Universities (No.
3102018ZY035).}}

\author{\quad Yandong Bai,
\quad Binlong Li \thanks{Corresponding author. E-mail addresses: bai@nwpu.edu.cn (Y. Bai), binlongli@nwpu.edu.cn (B. Li).}\\[2mm]
\small Department of Applied Mathematics, Northwestern Polytechnical University, \\
\small Xi'an 710129, China}
\date{\today}
\maketitle

\begin{abstract}
Let $k,l$ be two positive integers.
An $S_{k,l}$ is a graph obtained from disjoint $K_{1,k}$ and $K_{1,l}$ by adding an edge
between the $k$-degree vertex in $K_{1,k}$ and the $l$-degree vertex
in $K_{1,l}$. An {\em $S_{k,l}$-free} graph is a graph containing no
induced subgraph isomorphic to $S_{k,l}$. In this note, we show
that, for any positive integers $k,l$ with $2\leqslant k\leqslant l$, there
exists a constant $c=c(k,l)$ such that every connected balanced
$S_{k,l}$-free bipartite graph with minimum degree at least $c$
contains a connected $k$-factor.
\medskip

\noindent {\bf Keywords:} Connected $k$-factor; $k$-factor; Bipartite graph
\end{abstract}

\section{Introduction}

In this paper,
we only consider simple graphs, i.e., without loops or multiple edges.
For terminology and notation not defined here,
we refer the reader to \cite{BM2008}.

Let $k,l$ be two positive integers.
An $S_{k,l}$ is a graph obtained from disjoint $K_{1,k}$ and $K_{1,l}$
by adding an edge between the $k$-degree vertex in $K_{1,k}$ and the $l$-degree vertex in $K_{1,l}$,
one can see two simple examples $S_{2,3}$ and $S_{3,3}$ in Figure \ref{figure: S kl}.
Call a graph {\em $F$-free} if it contains no induced subgraph isomorphic to $F$.
A {\em $k$-factor} of a graph is a $k$-regular spanning subgraph of the graph.
Note that a connected 2-factor is a Hamilton cycle,
i.e., a cycle containing all vertices of the graph.

Let $G=(X,Y;E)$ be a bipartite graph with bipartition $(X,Y)$. We
call $G$ {\em balanced} if its two partite sets have the same
cardinality, i.e., $|X|=|Y|$. It is not difficult to see that if a
bipartite graph contains a $k$-factor then it is balanced. Thus
while considering the existence of $k$-factors in a bipartite graph
we will always assume that the bipartite graph is balanced;
similarly, while considering the existence of connected $k$-factors
we will always assume that the bipartite graph is connected. For a
subgraph $H$ of a graph $G$, we use $N_{H}(v)$ to denote the set of
neighbors of $v$ in $H$ and use $d_{H}(v)$ to denote the degree of
$v$ in $H$, i.e., the cardinality of $N_{H}(v)$. For a subset
$A\subseteq V(G)$, we use $N_{G}(A)$ to denote the set of neighbors of $v\in A$ in $G$, i.e.,
$$
N_{G}(A)=\bigcup_{v\in A}N_{G}(v).
$$

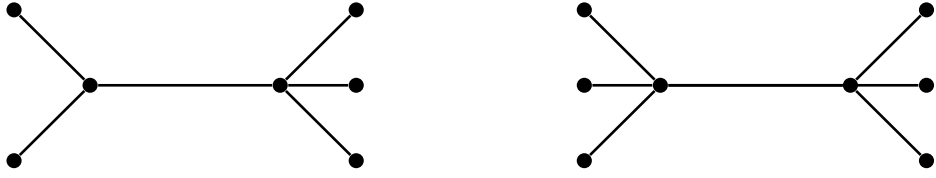
\begin{figure}[ht]
\begin{center}
\begin{tikzpicture}
\tikzstyle{vertex}=[circle,inner sep=2pt, minimum size=0.1pt]



\vertex (a)[fill] at (-6,-1)[]{};
\vertex (b)[fill] at (-6,1)[]{};
\vertex (c)[fill] at (-5,0)[]{};
\vertex (d)[fill] at (-2.5,0)[]{};
\vertex (e)[fill] at (-1.5,-1)[]{};
\vertex (f)[fill] at (-1.5,1)[]{};

\vertex (g)[fill] at (6,-1)[]{};
\vertex (h)[fill] at (6,1)[]{};
\vertex (i)[fill] at (5,0)[]{};
\vertex (j)[fill] at (2.5,0)[]{};
\vertex (k)[fill] at (1.5,-1)[]{};
\vertex (l)[fill] at (1.5,1)[]{};
\vertex (m)[fill] at (6,0)[]{};

\vertex (o)[fill] at (-1.5,0)[]{};
\vertex (r)[fill] at (1.5,0)[]{};

\draw [line width=1.0pt] (a)--(c);
\draw [line width=1.0pt] (b)--(c);
\draw [line width=1.0pt] (c)--(d);
\draw [line width=1.0pt] (d)--(e);
\draw [line width=1.0pt] (d)--(f);
\draw [line width=1.0pt] (d)--(o);

\draw [line width=1.0pt] (i)--(g);
\draw [line width=1.0pt] (i)--(h);
\draw [line width=1.0pt] (i)--(j);
\draw [line width=1.0pt] (j)--(k);
\draw [line width=1.0pt] (j)--(l);
\draw [line width=1.0pt] (j)--(m);
\draw [line width=1.0pt] (j)--(r);

\end{tikzpicture}
\end{center}
\caption{An $S_{2,3}$ and an $S_{3,3}$.}
\label{figure: S kl}
\end{figure}

Petersen \cite{Petersen1891} was regarded as the first to
investigate graphs with $k$-factors. Hall \cite{Hall1935} and Rado
\cite{Rado1949} were among the first ones to study $k$-factors in
bipartite graphs. A milestone result on this topic was obtained by
Tutte \cite{Tutte1952}  in 1952 who presented a necessary and
sufficient condition for a graph to contain a $k$-factor. After
that, finding various sufficient conditions for the existence of
(connected) $k$-factors has been widely studied. For more results on
$k$-factors in graphs, we refer the reader to the survey
\cite{AK2010} by Akiyama and Kano, or to
\cite{EJKS1985,Katerinis1985,Nam2010,Nishimura1992,Nishimura1989,Tutte1947,Tutte1952}.
For more results on $k$-factors in bipartite graphs, we refer the
reader to \cite{EOK1988,Katerinis1987,Ore1957,Ore1959,Rado1949}.

In this note,
we consider connected $k$-factors in bipartite graphs under forbidden subgraph conditions.
Note that any graph with more than 2 vertices contains no connected 1-factor.
Thus we consider the case of $k,l\geqslant 2$ in the following result.

\begin{theorem}\label{thm: connected k-factor}
For any positive integers $k,l$ with $2\leqslant k\leqslant l$, there exists a
constant $c=c(k,l)$ such that every connected balanced
$S_{k,l}$-free bipartite graph with minimum degree at least $c$
contains a connected $k$-factor.
In particular,
$$
c=\max\left\{(k^{3}+1)(2l-2k-1)+l,~2(k^{2}-k+l)\right\}
$$
will do.
\end{theorem}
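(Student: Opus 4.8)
The plan is to establish the theorem in two stages: first produce a (possibly disconnected) $k$-factor of $G$, then modify it by local surgery until it becomes connected. In both stages the forbidden subgraph enters only through one local consequence. Fix an edge $uv$ of $G$ with $u\in X$ and $v\in Y$, write $A=N_G(u)\setminus\{v\}\subseteq Y$ and $B=N_G(v)\setminus\{u\}\subseteq X$, and observe that an induced $S_{k,l}$ hanging off $uv$ is exactly a choice of $k$ vertices of $A$ and $l$ vertices of $B$ with no edges between them. Hence, provided $|A|\ge l(k-1)+k$, which holds since $\delta(G)\ge c$, at most $l-1$ vertices of $B$ can have fewer than $k$ neighbours in $A$: otherwise $l$ such vertices of $B$, together with $k$ vertices of $A$ avoiding their at most $l(k-1)$ neighbours, would span an induced $S_{k,l}$. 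Thus adjacent vertices have densely cross-connected neighbourhoods, and this is the engine for both stages.

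For the existence stage I would verify the standard deficiency criterion for bipartite $k$-factors, in the spirit of the cited work on bipartite factors: $G$ has a $k$-factor if and only if $\sum_{y\in Y}\min\{\,|N_G(y)\cap A|,\,k\,\}\ge k|A|$ for every $A\subseteq X$. Suppose this fails for some $A$. Since each vertex of $A$ has degree at least $c$, the $c|A|$ edges leaving $A$ cannot be absorbed by a set of saturated vertices small enough to force the failure, so the failure must localize to a part spanning an edge $uv$ whose two neighbourhoods meet $A$ and $N_G(A)$ too sparsely to be compatible with the local density statement above; extracting the $k$ and $l$ independent leaves then yields an induced $S_{k,l}$, a contradiction. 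This is the stage that calls for the branch $c\ge 2(k^{2}-k+l)$ of the constant, which guarantees enough degree to run the extraction.

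For the connectivity stage, choose among all $k$-factors one, $F$, with the fewest components, and suppose it is disconnected. Contracting the components and using that $G$ is connected, pick two components $C_1,C_2$ joined by an edge $x_1y_2\in E(G)$ with $x_1\in C_1\cap X$ and $y_2\in C_2\cap Y$. The elementary move is an alternating $4$-cycle swap: if some edge $x_2y_1\in E(G)$ has $y_1\in N_F(x_1)$ and $x_2\in N_F(y_2)$, then deleting the $F$-edges $x_1y_1,x_2y_2$ and inserting $x_1y_2,x_2y_1$ yields a $k$-factor in which $C_1$ and $C_2$ have merged, contradicting minimality. So I may assume no edge joins the $k$-set $N_F(x_1)\subseteq C_1$ to the $k$-set $N_F(y_2)\subseteq C_2$; together with the centres $x_1,y_2$ these already induce an $S_{k,k}$.

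The crux, and the step I expect to be the main obstacle, is promoting this $S_{k,k}$ to a forbidden $S_{k,l}$ when $l>k$: I must find $l-k$ further neighbours of $y_2$ in $X$ avoiding all of $N_F(x_1)$, yet the vertices of $N_F(x_1)$ may have large degree and block many candidates. I would counter this by not committing to a single $4$-cycle but allowing alternating cycles that run through short $F$-paths inside $C_1$; the endpoints reachable from $x_1$ by two further $F$-steps number on the order of $k^{3}$, so each potential blocking vertex either completes one of these enlarged merges or must be rare. Balancing the blocked neighbours of $y_2$ against this enlarged pool, and reapplying the local density estimate, forces fewer than $l$ neighbours of $y_2$ to remain blocked unless a merge is already available; the $l$ unblocked ones then complete an induced $S_{k,l}$, the desired contradiction. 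This accounting is what produces the branch $c\ge (k^{3}+1)(2l-2k-1)+l$, the factor $2l-2k-1$ reflecting the $l-k$ extra leaves together with the parity slack needed to keep the alternating cycles valid inside the $k$-regular components.
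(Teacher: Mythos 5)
Your proposal correctly identifies the two-stage skeleton that the paper also uses (Ore--Ryser for a $k$-factor, then a minimum-component $k$-factor plus $4$-cycle swaps, which yield that for any link $uv$ there is no edge between $N_F(u)$ and $N_F(v)$), and your opening ``local density'' observation about an edge $uv$ is sound in the bipartite setting. However, both of the steps that actually carry the proof are missing. In Stage 1, the assertion that a violation of the Ore--Ryser condition ``must localize to a part spanning an edge $uv$ whose two neighbourhoods meet $A$ and $N_G(A)$ too sparsely'' is not an argument: a deficient set $A$ is a global object, and the local density lemma at a single edge does not by itself contradict $k|A|>\sum_{y\in N_G(A)}\min\{k,d_A(y)\}$. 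The paper instead takes a minimal deficient $A$, stratifies $V(G)$ into distance layers $A_i$ from $A$, splits each layer into three classes according to degrees into the previous layer, and proves a chain of inequalities ($|A_0^0|>|A_1^1|$, $|A_i^0|\geqslant|A_{i+1}^1|$, $|A_i^2|\geqslant|A_{i-1}^0|+|A_{i+1}^2|$) whose sum gives $|X|>|Y|$, contradicting balancedness. Nothing in your sketch produces a contradiction of this (or any other) form, and it is exactly here that the minimality of $A$ and the bound $\delta(G)\geqslant 2(k^2-k+l)$ are used.

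In Stage 2 you correctly isolate the crux --- promoting the induced $S_{k,k}$ at a link to an $S_{k,l}$ when $l>k$ --- but your proposed resolution (alternating cycles through short $F$-paths, with an unspecified ``accounting'') is not carried out and is not the mechanism that makes the constant $(k^3+1)(2l-2k-1)+l$ work. The paper's actual device is different: for a component $H_1$ and $v\in V(H_1)$ it builds an auxiliary graph on $(V(G)\setminus V(H_1))\cap Y$ joining two vertices when they share an $F$-neighbour; this graph has maximum degree at most $k(k-1)$, hence is $(k^2-k+1)$-colourable, so if $d_{G-H_1}(v)$ exceeded $(k^2-k+1)(2l-2k-1)$ one could extract $2l-2k$ neighbours of $v$ with pairwise disjoint $F$-neighbourhoods and, via an edge count, exhibit an induced $S_{k,l}$. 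A companion bound on $|N_G(v_1)\cap V(H_1)|$ for a link endpoint $v_1$ then forces some vertex to have degree below $\delta(G)$. Without this (or an equivalent worked-out extraction of $l$ mutually compatible leaves), your Stage 2 remains a plan rather than a proof, so the proposal as written has a genuine gap in both stages.
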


\begin{theorem}\label{thm: k-factor}
For any positive integers $k,l,m\geqslant 1$, there exists a constant
$c'=c'(k,l,m)$ such that every balanced $S_{k,l}$-free bipartite
graph with minimum degree at least $c'$ contains an $m$-factor. In
particular, $c'=2(\max\{k,l,m\})^2$ will do.
\end{theorem}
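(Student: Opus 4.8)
The plan is to reduce the existence of an $m$-factor to a single Hall/max-flow inequality and then verify that inequality from the forbidden subgraph. First I would record the standard degree-factor criterion coming from max-flow--min-cut. Build a network with a source $s$ joined to each $x\in X$ by an arc of capacity $m$, each edge $xy\in E$ oriented $x\to y$ with capacity $1$, and each $y\in Y$ joined to a sink $t$ by an arc of capacity $m$; an integral flow of value $mn$ (with $n=|X|=|Y|$) is exactly an $m$-factor. Writing $d_S(y)=|N(y)\cap S|$ and optimizing an arbitrary $s$--$t$ cut over its $Y$-side collapses the condition to the single family indexed by $S\subseteq X$, so by max-flow--min-cut $G$ has an $m$-factor if and only if
\[
\sum_{y\in Y}\min\{m,\,d_S(y)\}\ \ge\ m|S|\qquad\text{for every } S\subseteq X .
\]
It therefore suffices to establish this inequality once $\delta(G)\ge c'=2d^2$, where $d=\max\{k,l,m\}$.

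Second, I would rewrite the criterion in a ``deficiency'' form that exposes where $S_{k,l}$-freeness must enter. Put $\bar S=X\setminus S$ and $D=\{y\in Y:\ d_S(y)<m\}$. Since $\sum_{y}\min\{m,d_S(y)\}=mn-\sum_{y\in D}\bigl(m-d_S(y)\bigr)$, the inequality fails for $S$ exactly when $\sum_{y\in D}\bigl(m-d_S(y)\bigr)>m|\bar S|$, and as each summand is at most $m$ this forces $|D|>|\bar S|$. Moreover, because $d(y)\ge\delta\ge m$, every $y\in D$ satisfies $|N(y)\cap\bar S|=d(y)-d_S(y)>\delta-m\ge 2d^2-d$. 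Thus a violation produces a set $D\subseteq Y$ with $|D|>|\bar S|$ in which every vertex sends more than $\delta-m$ edges into the comparatively small set $\bar S$; equivalently, the pairs $(S,\,Y\setminus N(S))$ and $(\bar S,\,Y\setminus N(\bar S))$ span no edges, and the deficiency makes this imbalance substantial.

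Third, the structural engine is a one-line consequence of $S_{k,l}$-freeness that I would isolate as a lemma: if $uv\in E$ with $u\in X$, $v\in Y$, and there are $A\subseteq N(u)\setminus\{v\}$ and $B\subseteq N(v)\setminus\{u\}$ with $|A|\ge k$, $|B|\ge l$ and no edge of $G$ between $A$ and $B$, then $\{u,v\}\cup A\cup B$ induces an $S_{k,l}$. Indeed the only adjacencies the pattern needs are $uv$, all edges from $u$ to $A$, and all edges from $v$ to $B$; the pairs $u,B$ and $v,A$ are non-adjacent automatically since each lies on one side of the bipartition, and $A,B$ is empty by hypothesis. So to contradict $S_{k,l}$-freeness it is enough to locate one edge $uv$ together with $k$ neighbours of $u$ and $l$ neighbours of $v$ that are pairwise non-adjacent across.

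Finally I would combine these. The natural source of a cross-empty pair is a Hall defect itself: $S$ has no edges to $Z:=Y\setminus N(S)$, so any $A\subseteq Z$ and $B\subseteq S$ are cross-empty. Using the imbalance from the deficiency form together with $\delta\ge 2d^2$, I would argue by averaging that one can find an edge $uv$ with $u$ having at least $k$ neighbours in $Z$ and $v$ having at least $l$ neighbours in $S$; this is precisely where the quadratic threshold is spent, to force $\ge k$ and $\ge l$ suitable leaves simultaneously, and then the lemma applies. The main obstacle is exactly this extraction in the remaining regime where $N(S)$ (and $N(\bar S)$) already meet all of $Y$, so that no global cross-empty pair exists: there one must either show that the criterion simply cannot fail, or extract an empty $k\times l$ pattern locally from the cross-graph of a single well-chosen edge, bounding its non-edges by a Zarankiewicz-type count; checking that $\delta\ge 2d^2$ always yields such a pattern is the crux of the proof.
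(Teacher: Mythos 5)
Your setup is sound as far as it goes: the max-flow criterion you state is exactly the Ore--Ryser condition the paper uses (its Theorem~4 with $f\equiv m$), the deficiency reformulation is correct, and your lemma that an edge $uv$ together with $k$ neighbours of $u$ and $l$ neighbours of $v$ spanning no cross edges yields an induced $S_{k,l}$ is valid (bipartiteness supplies all the other required non-adjacencies for free). But the proof is not complete, and you say so yourself: the entire content of the theorem lies in the ``remaining regime'' you defer in your last sentence. A violating set $S$ gives you a set $D$ with $|D|>|\bar S|$ of vertices $y$ with $d_S(y)<m$, each sending more than $\delta-m$ edges into $\bar S$, but this produces no cross-empty $k\times l$ pattern when $N(S)=Y$: the set $Z=Y\setminus N(S)$ is then empty and your ``natural source of a cross-empty pair'' vanishes. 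Averaging over the edges between $D$ and $\bar S$ only yields vertices of high degree, not the required independent configuration, and no Zarankiewicz-type count is actually carried out. In the paper this is precisely where the work happens: one takes a violating set $A$ of minimum cardinality, stratifies $V(G)$ into distance layers $A_i$ from $A$, splits each layer into parts $A_i^0,A_i^1,A_i^2$ according to degrees into the previous layer, and proves layer-by-layer inequalities (Claims~1 and~2 of Step~1 of Theorem~1) whose sum gives $|X|>|Y|$, contradicting balancedness. That global, multi-layer propagation is exactly what your local argument around $S$ and $\bar S$ is missing, so the crux you flag is a genuine gap rather than a routine verification.

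You have also overlooked the much cheaper route that the paper actually takes for this theorem: set $t=\max\{k,l,m\}$, observe that $S_{k,l}$ is an induced subgraph of $S_{t,t}$ so that $G$ is $S_{t,t}$-free, invoke the already-established Step~1 of the proof of Theorem~1 (whose degree requirement for $k=l=t$ is $2(t^2-t+t)=2t^2$) to obtain a $t$-factor, and then use the fact that a $t$-regular bipartite graph decomposes into $t$ perfect matchings and hence contains an $m$-factor for every $m\leqslant t$. This reduction avoids redoing the factor-existence argument with three independent parameters, which is where your approach gets stuck.
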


For the existence of connected 2-factors, i.e.,
Hamilton cycles, we obtain the following result.

\begin{theorem}\label{thm: HC in S13 free}
Every connected balanced $S_{1,3}$-free bipartite graph with
minimum degree at least $4$ contains a Hamilton cycle.
\end{theorem}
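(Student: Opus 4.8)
The plan is to base everything on the following reformulation of the hypothesis, which I regard as the key tool. Write $G=(X,Y;E)$ and $N(\cdot)=N_{G}(\cdot)$. I claim that $G$ is $S_{1,3}$-free if and only if for every two vertices $u,p$ lying in the same part with $N(u)\cap N(p)\neq\emptyset$ one has $|N(u)\setminus N(p)|\le 2$. Indeed, in a bipartite graph an induced $S_{1,3}$ must embed with its unique degree-$4$ vertex as a ``centre'' $u$ and the unique degree-$1$ vertex of $u$'s part as a vertex $p$; reading off the adjacencies, such a copy exists exactly when $u$ and $p$ share a neighbour $q$ while $u$ has three further neighbours $a,b,c$, none adjacent to $p$ -- that is, exactly when $|N(u)\setminus N(p)|\ge 3$ for some $p$ sharing a neighbour with $u$. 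So the condition asserts that neighbourhoods of same-part vertices which meet are nearly nested. Two cheap reductions trim the extreme ranges. First, $S_{1,3}$ is an induced subgraph of $S_{2,3}$ (delete one leaf at the degree-$2$ centre), so every $S_{1,3}$-free graph is $S_{2,3}$-free and Theorem~\ref{thm: connected k-factor} with $k=2,\,l=3$ already produces a Hamilton cycle as soon as $\delta(G)\ge c(2,3)=12$. Second, the classical Moon--Moser theorem (the bipartite analogue of Dirac's theorem) gives a Hamilton cycle whenever $\delta(G)>n/2$, where $n=|X|=|Y|$; in particular this settles every $n\le 7$, since then $\delta(G)\ge 4>n/2$. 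Hence it suffices to find a Hamilton cycle under the extra assumptions $4\le\delta(G)\le 11$ and $\delta(G)\le n/2$.

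On this remaining range I would run an extremal longest-cycle argument. Suppose $G$ has no Hamilton cycle and let $C$ be a longest cycle; by balancedness the off-cycle set $R=V(G)\setminus V(C)$ splits equally between $X$ and $Y$, and $R\neq\emptyset$. As $G$ is connected, some vertex of $R$ has a neighbour on $C$; say (by the $X$--$Y$ symmetry) $v\in R\cap X$ with $v\sim y$ for some $y\in V(C)\cap Y$, and let $a\in V(C)\cap X$ be a neighbour of $y$ along $C$. Since $v$ and $a$ share the neighbour $y$, the reformulation yields $|N(v)\setminus N(a)|\le 2$: all but at most two neighbours of $v$ are also neighbours of $a$. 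Consequently, if $v$ has three neighbours inside $R$, then at least one such $w\in R\cap Y$ satisfies $w\sim a$, and replacing the edge $ya$ of $C$ by the path $y\,v\,w\,a$ produces a cycle longer than $C$, a contradiction. Thus every off-cycle vertex has at most two neighbours in $R$, and therefore at least $\delta-2\ge 2$ neighbours on $C$. This ``generic'' step, where the $S_{1,3}$-free hypothesis does its work, is the easy half.

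The remaining case -- and, I expect, the genuinely hard one -- is when the off-cycle vertices are attached almost entirely to $C$, each with only $0$, $1$ or $2$ neighbours in $R$, so that no single off-cycle vertex can be absorbed by inserting an off-cycle edge. Here one must instead exploit the near-equality $N(a)\approx N(v)$ to rotate: whenever $v$ is adjacent to both $C$-neighbours of a cycle vertex $a$ (which, by the reformulation, fails for at most two such $a$), one may swap $v$ in for $a$ to obtain another longest cycle, and I would iterate such swaps to manoeuvre two off-cycle vertices into adjacent positions, producing an insertable $R$-edge and hence a longer cycle. Controlling this rotation -- guaranteeing that it ends either in a longer cycle or in a local vertex set meeting the Moon--Moser threshold $\delta>n/2$ -- is the main obstacle, and is exactly where the value $c=4$ is spent. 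The structural picture guiding the bookkeeping is that the near-nested condition forces, for all large $n$, that each vertex misses at most two vertices of the opposite part, i.e. $\delta(G)\ge n-2$; the extremal $S_{1,3}$-free families realising this are the blow-ups of the path $P_{4}$ whose two end-blobs have size $2$, which satisfy $\delta=n-2$ and are (just) Hamiltonian, and they indicate why the argument in this last case must be delicate.
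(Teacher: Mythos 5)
Your reformulation of $S_{1,3}$-freeness (same-part vertices with a common neighbour have nearly nested neighbourhoods) is correct, and the two reductions are sound: Theorem~\ref{thm: connected k-factor} with $k=2,l=3$ handles $\delta\geqslant 12$, and Moon--Moser handles $n\leqslant 7$. The "easy half" of your longest-cycle argument is also fine as far as it goes (though note it only bounds the number of $R$-neighbours of those off-cycle vertices that have a neighbour on $C$). But what remains after the reductions --- $4\leqslant\delta(G)\leqslant 11$ and $n\geqslant 8$ --- is essentially the whole theorem, since the sharpness examples live at $\delta=3$ and the critical instances are $4$-regular; and for this range you give only a plan, which you yourself flag as the "genuinely hard" and uncompleted part. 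That is a genuine gap, not a finishing detail.

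Moreover, the structural picture you propose to guide the endgame is false. It is not true that for large $n$ every vertex misses at most two vertices of the opposite part (i.e.\ $\delta(G)\geqslant n-2$): the double graph of a long even cycle is a connected balanced $4$-regular $S_{1,3}$-free bipartite graph with $n$ arbitrarily large and $\delta=4$ (any two same-part vertices sharing a neighbour there satisfy $|N(u)\setminus N(p)|\leqslant 2$). So the rotation process cannot be steered toward the Moon--Moser threshold, and the extremal family is not a blow-up of $P_4$ but precisely this double-cycle graph --- which is the object the paper's own argument converges to. The paper proceeds quite differently: it first extracts a $2$-factor via the Ore--Ryser theorem (a layered counting argument showing a minimal violating set would force $|X|>|Y|$), then takes a $2$-factor with fewest components and shows, using $S_{1,3}$-freeness, that every vertex sees at most two components, every component is a $C_4$, and $G$ is the double graph of a cycle, hence Hamiltonian. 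To salvage your approach you would need either to carry out the rotation argument in full against the double-cycle obstruction, or to switch to a $2$-factor-plus-merging strategy of the paper's type.
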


Tutte \cite{Tutte1971} conjectured in 1971 that every 3-connected 3-regular bipartite graph contains a Hamilton cycle.
The conjecture was disproved by Horton \cite{Horton1982},
one can see \cite{EH1983} for more counterexamples.
Note that every 3-regular bipartite graph is $S_{1,3}$-free.
Thus the degree condition in Theorem \ref{thm: HC in S13 free} is sharp.

\section{Proofs of Theorems \ref{thm: connected k-factor}, \ref{thm: k-factor} and \ref{thm: HC in S13 free}}

Before to proceed our proofs, we introduce some necessary notations
and a useful result. Let $f$ be a nonnegative integer-valued
function on $V(G)$. An {\em $f$-factor} of $G$ is a spanning
subgraph $F$ of $G$ such that $d_{F}(v)=f(v)$ for all $v\in V(G)$.
Note that a $k$-factor is an $f$-factor with $f(v)=k$ for all $v\in
V(G)$. The following useful theorem on the existence of $f$-factors
in bipartite graphs was first proved by Ore in \cite{Ore1957}. One
can also find a necessary and sufficient condition with different
form by Tutte in \cite{Tutte1952}.

\begin{theorem}[Ore-Ryser Theorem]\label{thm: ore-ryser}
A bipartite graph $G=(X,Y;E)$ contains an $f$-factor if and only if
$\sum_{x\in X}f(x)=\sum_{y\in Y}f(y)$ and for each subset
$A\subseteq X$,
$$
\sum_{x\in A}f(x)\leqslant \sum_{y\in N_{G}(A)}\min\left\{f(y),d_{A}(y)\right\}.
$$
\end{theorem}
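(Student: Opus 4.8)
The plan is to prove the two directions separately, with necessity by direct edge-counting and sufficiency by a maximum-flow/minimum-cut argument. For necessity, suppose $F$ is an $f$-factor. Since $F$ is bipartite, counting its edges from the $X$-side and from the $Y$-side gives $\sum_{x\in X}f(x)=|E(F)|=\sum_{y\in Y}f(y)$. For any $A\subseteq X$, the quantity $\sum_{x\in A}f(x)$ counts exactly the edges of $F$ incident to $A$; each such edge ends at some $y\in N_G(A)$, and the number of $F$-edges joining a fixed $y$ to $A$ is at most $\min\{f(y),d_A(y)\}$, since it is at most $d_F(y)=f(y)$ and at most $|N_G(y)\cap A|=d_A(y)$ as $F\subseteq G$. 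Summing over $y\in N_G(A)$ yields the stated inequality.

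For sufficiency I would build a flow network: add a source $s$ and sink $t$, an arc $s\to x$ of capacity $f(x)$ for each $x\in X$, an arc $y\to t$ of capacity $f(y)$ for each $y\in Y$, and an arc $x\to y$ of capacity $1$ for each edge $xy\in E$. An integral $s$–$t$ flow of value $\sum_{x\in X}f(x)$ corresponds precisely to an $f$-factor: saturation of every $s\to x$ forces out-degree $f(x)$ at each $x$; the unit capacities force the chosen $x\to y$ arcs to be distinct edges of $G$; and, using $\sum_{x\in X}f(x)=\sum_{y\in Y}f(y)$, the whole flow must saturate every $y\to t$, forcing degree $f(y)$ at each $y$. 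All capacities are integers, so the integrality theorem for maximum flows produces such a flow provided the maximum flow value equals $\sum_{x\in X}f(x)$. The trivial cut $\{s\}$ shows this value is at most $\sum_{x\in X}f(x)$, so by max-flow/min-cut it suffices to prove that every cut has capacity at least $\sum_{x\in X}f(x)$.

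The heart of the argument is this cut estimate. Consider a cut given by a set $S$ with $s\in S$, $t\notin S$, and write $X_1=X\cap S$, $Y_1=Y\cap S$, $Y_0=Y\setminus S$. The crossing arcs are the $s\to x$ with $x\in X\setminus S$, the $y\to t$ with $y\in Y_1$, and the $x\to y$ with $x\in X_1$ and $y\in Y_0$, so the cut capacity equals $\sum_{x\in X\setminus S}f(x)+\sum_{y\in Y_1}f(y)+e(X_1,Y_0)$, where $e(X_1,Y_0)$ counts the edges of $G$ between $X_1$ and $Y_0$. Cancelling $\sum_{x\in X\setminus S}f(x)$, the required bound reduces to $\sum_{x\in X_1}f(x)\leqslant\sum_{y\in Y_1}f(y)+e(X_1,Y_0)$. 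I would derive this by applying the hypothesis to $A=X_1$: splitting $N_G(X_1)$ into its part in $Y_1$ and its part in $Y_0$, bounding $\min\{f(y),d_{X_1}(y)\}\leqslant f(y)$ on $Y_1$ and $\min\{f(y),d_{X_1}(y)\}\leqslant d_{X_1}(y)$ on $Y_0$, and using $\sum_{y\in Y_0}d_{X_1}(y)=e(X_1,Y_0)$.

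The main obstacle is the sufficiency direction, and within it the cut estimate: one must recognize that evaluating the combinatorial hypothesis precisely at the source-side set $A=X_1$ produces exactly the inequality needed, with the two terms of $\min\{f(y),d_A(y)\}$ matching the sink-saturation arcs (for $y\in Y_1$) and the unit edge-arcs (for $y\in Y_0$). The unit capacities on the $x\to y$ arcs are what encode the simplicity of $G$ and are essential for the resulting flow to yield a genuine $f$-factor; this is also why the flow formulation is more convenient here than a direct reduction to Hall's theorem via vertex-splitting, where the ``at most one edge per pair'' constraint would have to be imposed by hand.
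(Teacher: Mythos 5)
Your proof is correct, but note that the paper itself offers no proof of this statement: it is quoted as a known result, attributed to Ore \cite{Ore1957}, with a pointer to an equivalent formulation by Tutte \cite{Tutte1952}, and is then used as a black box in the proofs of Theorems \ref{thm: connected k-factor} and \ref{thm: HC in S13 free}. So there is no paper proof to match; what you have supplied is the standard modern derivation via max-flow/min-cut, and all the steps check out. The necessity direction by double counting is exactly right, including the observation that the number of $F$-edges at a fixed $y\in N_G(A)$ going into $A$ is bounded both by $d_F(y)=f(y)$ and by $d_A(y)$ (the latter using $F\subseteq G$ and simplicity). In the sufficiency direction, your network is the right one, the integrality theorem applies since all capacities are integers, and your cut computation is correct: the arcs crossing a cut $S$ are precisely the $s\to x$ with $x\in X\setminus S$, the $y\to t$ with $y\in Y_1$, and the unit arcs from $X_1$ to $Y_0$ (arcs from $X\setminus S$ into $Y_1$ enter $S$ and do not count), and applying the hypothesis to $A=X_1$ with the split $\min\{f(y),d_{X_1}(y)\}\leqslant f(y)$ on $Y_1$ and $\leqslant d_{X_1}(y)$ on $Y_0$ gives exactly the needed bound; vertices of $Y_1$ or $Y_0$ outside $N_G(X_1)$ contribute nonnegatively or zero, respectively, so the bookkeeping is sound. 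Your closing remark is also apt: compared with the alternative route of splitting each vertex $v$ into $f(v)$ copies and invoking Hall's theorem (closer in spirit to Ore's and Rado's original setting), the flow formulation encodes the ``at most one parallel edge'' constraint automatically through the unit capacities, whereas the vertex-splitting reduction requires extra care to prevent a matching from using several copies of the same edge of $G$.
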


\begin{proof}[\bf{Proof of Theorem \ref{thm: connected k-factor}}]
We first show
that $G$ contains a $k$-factor and then show that there exists a
connected one. Through the proof, we assume that the minimum degree
$\delta(G)$ of $G$ satisfies
$$
\delta(G)\geqslant\max\left\{(k^{3}+1)(2l-2k-1)+l,~2(k^{2}-k+l)\right\}.
$$

\medskip
\noindent
\emph{\textbf{Step 1.} $G$ contains a $k$-factor.}
\medskip

Assume the opposite that $G$ contains no $k$-factor. By Theorem
\ref{thm: ore-ryser}, there exists $A\subseteq X$ such that
\begin{equation}\label{equation: A}
k|A|>\sum_{y\in N_{G}(A)}\min\{k,d_{A}(y)\}.
\end{equation}
We choose such an $A$ with minimum cardinality. Let $A_{0}=A$ and,
for $i\geqslant 1$, let
$$
A_{i}=\{v\in V(G): ~d(v,A)=i\},
$$
where $d(v,A)$ denotes the distance between $v$ and $A$. Now we let
$A_{0}^{0}=A_{0}$, $A_{0}^{1}=A_{0}^{2}=\emptyset$, and for
$i=1,2,\ldots$, set
\begin{equation*}
\begin{split}
A_{i}^{0}   &=\{v\in A_{i}:~d_{A_{i-1}}(v)<k\},\\
A_{i}^{1}   &=\{v\in A_{i}\backslash A_{i}^{0}:~N_{A_{i-1}}(v)\subseteq A_{i-1}^{0}\},\\
A_{i}^{2}   &=A_{i}\backslash (A_{i}^{0}\cup A_{i}^{1}).
\end{split}
\end{equation*}
Clearly $|A_{i}|=|A_{i}^{0}|+|A_{i}^{1}|+|A_{i}^{2}|$.
Denote by
$A_i^{01}=A_i^0\cup A_i^1$ and $A_i^{12}=A_i^1\cup A_i^2$.
We remark that some of $A_{i}^{0},A_{i}^{1},A_{i}^{2}$ may be empty. Recall
that $A_{0}\subseteq X$. We have
\begin{equation}
X=\bigcup_{i\text{ is even}}A_i,~~Y=\bigcup_{i\text{ is odd}}A_i;
\end{equation}
We will get a contradiction by showing that $|X|>|Y|$.

\begin{claim}\label{claim: one set larger than another one}
$|A_{0}^{0}|>|A_{1}^{1}|$ and $|A_{i}^{0}|\geqslant |A_{i+1}^{1}|$ for each $i\geqslant 1$.
\end{claim}

\begin{proof}
By (\ref{equation: A}),
$$
k|A_{0}^{0}|=k|A_{0}|>\sum_{y\in N_{G}(A_{0})}\min\left\{k,d_{A_{0}}(y)\right\}=k|A_{1}^{1}|+\sum_{y\in A_{1}^{0}}d_{A_{0}}(y)\geqslant k|A_{1}^{1}|.
$$
This implies $|A_{0}^{0}|>|A_{1}^{1}|$. We will show the second
assertion in the following.

We first claim that for every vertex $v\in A_i^0$, $i\geqslant 1$,
$N_{A_{i-1}}(v)\subseteq A_{i-1}^0$.
Assume the opposite that $v$ has
a neighbor $u\in A_{i-1}^{12}$. Then $i\geqslant 2$ and the vertex $u$ has $k$
neighbors in $A_{i-2}$.
Since $v\in A_i^0$,
we have $d_{A_{i+1}}(v)\geqslant\delta(G)-(k-1)\geqslant l$ (note that $\delta(G)\geqslant
k+l-1$). Then the vertices $u,v$, together with $k$ vertices in
$N_{A_{i-2}}(u)$ and $l$ vertices in $N_{A_{i+1}}(v)$, induce an
$S_{k,l}$ of $G$, a contradiction. Thus, as we claimed,
$N_{A_{i-1}}(v)\subseteq A_{i-1}^0$ for every vertex $v\in A_i^0$.
Together with the definition of $A_i^1$, we have
\begin{equation}\label{Eq3}
N_{A_{i-1}^{12}}(v)=\emptyset, \mbox{ for every }v\in A_i^{01},~
i\geqslant 1.
\end{equation}

Next we show that for every vertex $x\in A_0^0$, $d_{A_1^0}(x)\leqslant k-1$.
Assume the opposite that $d_{A_1^0}(x)\geqslant k$ for some $x\in A_{0}^{0}$.
Let $A'=A\backslash\{x\}$.
Then
$$
\sum_{y\in N_{G}(A')}\min\{k,d_{A'}(y)\}
\leqslant
k|A_1^1|+\sum_{y\in A_1^0}d_A(y)-k
=
\sum_{y\in N_{G}(A)}\min\{k,d_{A}(y)\}-k<k|A|-k=k|A'|
$$
and
$$
k|A'|>\sum_{y\in N_{G}(A')}\min\{k,d_{A'}(y)\},
$$
contradicting the minimality of $A$.
Thus
\begin{equation}\label{Eq4}
d_{A_1^0}(x)\leqslant k-1, \mbox{ for every }x\in A_0.
\end{equation}

Now we show that for every vertex $v\in A_i^0$, $i\geqslant 1$,
$d_{A_{i+1}^{01}}(v)\leqslant k-1$.
Assume that the opposite assertion holds.
Let $v\in A_i^0$
such that $d_{A_{i+1}^{01}}(v)\geqslant k$ and $i$ is as small as possible.
Let $u$ be a neighbor of $v$ in $A_{i-1}$.
By (\ref{Eq3}),
$u\in A_{i-1}^0$.
By (\ref{Eq4}) and the choice of $i$, we have
$d_{A_i^0}(u)\leqslant k-1$.
It follows that
$d_{A_{i-2}}(u)+d_{A_i^{12}}(u)\geqslant l$.
By (\ref{Eq3}),
$E(A_i^{12},A_{i+1}^{01})=\emptyset$. Thus $u,v$, together with $k$
vertices in $N_{A_{i+1}^{01}}(v)$ and $l$ vertices in
$N_{A_{i-2}}(u)\cup N_{A_i^{12}}(u)$ induce an $S_{k,l}$ of $G$, a
contradiction.
This implies that
\begin{equation}\label{Eq5}
d_{A_{i+1}^{01}}(v)\leqslant k-1, \mbox{ for every }v\in A_i^0,~ i\geqslant 1.
\end{equation}
Consequently, every vertex in $A_i^0$ has at most $k-1$ neighbors in
$A_{i+1}^1$ for each $i\geqslant 1$.
If $A_{i}^{0}\neq \emptyset$,
then, since every vertex in $A_{i+1}^1$
has at least $k$ neighbors in $A_i^0$,
we have $|A_{i}^{0}|>|A_{i+1}^{1}|$.
If $A_{i}^{0}=\emptyset$,
then clearly $A_{i+1}^{1}=\emptyset$ and $|A_{i}^{0}|\geqslant |A_{i+1}^{1}|$.
So we conclude that
$|A_i^0|\geqslant |A_{i+1}^1|$ for each $i\geqslant 1$.
\end{proof}

\begin{claim}\label{claim: one set not smaller than other two}
$|A_{i}^{2}|\geqslant |A_{i-1}^{0}|+|A_{i+1}^{2}|$ for each $i\geqslant 2$.
\end{claim}

\begin{proof}
We first claim that for every vertex $x\in A_2^2$,
$d_{A_1^0}(x)+d_{A_3}(x)\leqslant k(k-1)+l-1$. Assume the opposite that there exists some $x\in A_{2}^{2}$ which contradicts the assertion.
Let $S$ be a set of $k(k-1)+l$ vertices in $N_{A_1^0}(x)\cup N_{A_3}(x)$.
Let $y$ be a neighbor of $x$ in $A_1^{1}$ (note that
$A_1^{2}=\emptyset$ and such a $y$ exists) and let $T$ be a set of $k$ neighbors of $y$ in
$A_0$. If there are $l$ vertices in $S$ that have no neighbor in $T$,
then we can find an induced $S_{k,l}$ of $G$, a contradiction. Thus
there are at most $l-1$ vertices in $S$ that have no neighbor in $T$
and this implies that
$|E(S,T)|\geqslant |S|-(l-1)=k(k-1)+1$.
It follows that there
exists a vertex $x'\in T$ which has at least $\lceil\frac{k(k-1)+1}{k}\rceil=k$ neighbors in $S$. Since
$x'$ has no neighbor in $A_3$,
we get that $x'$ has at least $k$ neighbors in $A_1^0$,
contradicting (\ref{Eq4}).
Thus as we claimed,
\begin{equation}\label{Eq6}
d_{A_1^0}(x)+d_{A_3}(x)\leqslant k(k-1)+l-1, \mbox{ for every }x\in
A_2^2.
\end{equation}

We next claim that for every vertex
$v\in A_i^2$, $i\geqslant 3$,
$d_{A_{i-1}^{01}}(v)+d_{A_{i+1}}(v)\leqslant k(k-1)+l$.
Assume the opposite that there exists some $v\in A_{i}^{2}$ which contradicts the assertion.
Let $u$ be a neighbor of $v$ in $A_{i-1}^{12}$.
Let $S$ be a set of $k(k-1)+l$ vertices in $N_{A_{i-1}^{01}}(v)\cup
N_{A_{i+1}}(v)\backslash\{u\}$
and let $T$ be a set of $k$ neighbors of
$u$ in $A_{i-2}$.
Similarly as the analysis above,
we can see that there exists a vertex $v'\in T$ which has at least $k$ neighbors in $S$.
By (\ref{Eq3}), $v'\in A_{i-2}^0$ and the vertex $v'$ has at least $k$ neighbors in
$A_{i-1}^{01}$, contradicting (\ref{Eq4}).
Thus as we claimed,
\begin{equation}\label{Eq7}
d_{A_{i-1}^{01}}(v)+d_{A_{i+1}}(v)\leqslant k(k-1)+l, \mbox{ for every
}v\in A_i^2, ~i\geqslant 3.
\end{equation}

By (\ref{Eq5}) and the definition of $A_i^0$, for every vertex $v\in
A_i^0$, $i\geqslant 1$, $d_{A_{i+1}^2}(v)\geqslant \delta(G)-2(k-1)$.
Since $\delta(G)\geqslant (k+2)(k-1)+l$,
we have
\begin{equation}\label{Eq8}
d_{A_i^2}(v)\geqslant k(k-1)+l, \mbox{ for every }v\in A_{i-1}^0, ~i\geqslant 2.
\end{equation}
By (\ref{Eq3}) and (\ref{Eq7}),
for every vertex $v\in A_i^2$, $i\geqslant 3$, $d_{A_{i-1}^2}(v)\geqslant \delta(G)-k(k-1)-l$.
Since $\delta(G)\geqslant 2k(k-1)+2l$,
we have
\begin{equation}\label{Eq9}
d_{A_i^2}(v)\geqslant k(k-1)+l, \mbox{ for every }v\in A_{i+1}^2, ~i\geqslant 2.
\end{equation}

By combining (\ref{Eq6}) and (\ref{Eq7}), we have
\begin{equation*}
d_{A_{i-1}^{0}}(v)+d_{A_{i+1}^2}(v)\leqslant  k(k-1)+l, \mbox{ for every
}v\in A_i^2,~ i\geqslant 2.
\end{equation*}
Together with (\ref{Eq8}) and (\ref{Eq9}),
we can get that
$|A_{i}^{2}|\geqslant |A_{i-1}^{0}|+|A_{i+1}^{2}|$ for each
$i\geqslant 2$.
\end{proof}

Now by Claims \ref{claim: one set larger than another one} and
\ref{claim: one set not smaller than other two}, we get that
$|X|>|Y|$, a contradiction to the assumption that $G$ is balanced.
So $G$ has a $k$-factor.
Note that till now we only need to require
$$
\delta(G)\geqslant \max\left\{k+l-1,(k+2)(k-1)+l,2k(k-1)+2l\right\}=2k(k-1)+2l=2(k^{2}-k+l).
$$

\medskip
\noindent
\emph{\textbf{Step 2.} $G$ contains a connected $k$-factor.}
\medskip

We choose a $k$-factor $F$ of $G$ with minimum number of components.
If $F$ is connected, then we are done. So we assume that $F$ is
disconnected. Since $G$ is connected,
there exists some edge $uv\in E(G)$
such that $u$ and $v$ are contained in distinct components of $F$.
We call such an edge a \emph{link}.
If $uv$ is a link and there
are $u'\in N_F(u)$ and $v'\in N_F(v)$ such that $u'v'\in E(G)$,
then
$$
(E(F)\backslash\{uu',vv'\})\cup\{uv,u'v'\}
$$
induces a $k$-factor
with less components, a contradiction. Thus we have
\begin{equation}\label{Eq10}
E_G(N_F(u),N_F(v))=\emptyset, \mbox{ for every link }uv.
\end{equation}

\begin{claim}\label{claim: number of neighbors in other components}
Let $H_1$ be a component of $F$ and $v\in V(H_1)$. Then
$d_{G-H_1}(v)\leqslant (k^2-k+1)(2l-2k-1)$.
\end{claim}

\begin{proof}
Assume w.l.o.g. that $u\in X$. We construct a graph $G^{*}$ on the
vertex set $(V(G)\backslash V(H_1))\cap Y$. Two vertices in
$V(G^{*})$ are adjacent if and only if they have a common neighbor in the graph $F-H_1$.
Since $F$ is $k$-regular, one can see that
each vertex in $G^{*}$ has at most $k(k-1)$ neighbors in $G^{*}$ and
the chromatic number of $G^{*}$ is at most $k(k-1)+1$. It follows
that $V(G^{*})$ can be partitioned into $k(k-1)+1$ independent sets.

Suppose the opposite that $d_{G-H_1}(v)\geqslant (k^2-k+1)(2l-2k-1)+1$. Then there exists
an independent set $I$ of $G^*$ such that $d_I(v)\geqslant 2l-2k$. Let
$S$ be a set of $2l-2k$ vertices in $N_I(v)$ and let $T=\bigcup_{y\in S}N_F(y)$.
Since $S$ is independent in $G^*$,
each two vertices of $S$ has no common neighbors in $F$, i.e., $|T|=k(2l-2k)$. By
(\ref{Eq10}), $E_G(N_{H_1}(v),T)=\emptyset$.

Let $y$ be an arbitrary vertex in $S$. If there are $l-k$ vertices
in $S\backslash\{y\}$ that are not adjacent to any vertex in
$N_F(y)$, then $\{v,y\}$, together with $k$ vertices in $N_{F}(y)$
and $l$ vertices in $N_{H_{1}}(v)\cup S\backslash \{y\}$, induces an
$S_{k,l}$ of $G$, a contradiction. Thus there are at most $l-k-1$
vertices in $S\backslash\{y\}$ which are not adjacent to any vertex
in $N_F(y)$. It follows that
$$
|E_G(S,N_F(y))|\geqslant
(2l-2k-1)-(l-k-1)+k=l,
$$
and hence $|E_G(S,T)|\geqslant l(2l-2k)$. So there exists a vertex
$y'\in S$ with $d_T(y')\geqslant l$. It follows that $\{v,y'\}$,
together with $k$ vertices in $N_{H_1}(v)$ and $l$ vertices in
$N_T(y')$, induces an $S_{k,l}$ of $G$, a contradiction.
\end{proof}

\begin{claim}\label{claim: number of neighbors in its component}
Let $v_1v_2$ be a link and let $H_1$ be the component of $F$
containing $v_1$. Then $|N_{G}(v_1)\cap V(H_1)|\leqslant
k(k^2-k+1)(2l-2k-1)+l-1$.
\end{claim}

\begin{proof}
Assume that $|N_{G}(v_1)\cap V(H_1)|\geqslant
k(k^2-k+1)(2l-2k-1)+l$. Let $S$ be a set of $k(k^2-k+1)(2l-2k-1)+l$
vertices in $N_{G}(v_1)\cap V(H_1)$ and let $T=N_F(v_2)$. Note that
$|T|=k$ and there always exists an edge between $T$ and each $l$
vertices in $S$. It therefore follows that
$$
|E_G(S,T)|\geqslant k(2l-2k-1)(k^2-k+1)+l-(l-1)=k(2l-2k-1)(k^2-k+1)+1.
$$
Thus there exists one vertex $v$ of $T$ such that
$$
d_{H_1}(v)\geqslant \left\lceil\frac{k(2l-2k-1)(k^2-k+1)+1}{k}\right\rceil\geqslant (2l-2k-1)(k^2-k+1)+1,
$$
contradicting Claim \ref{claim: number of neighbors in other components}.
\end{proof}

From Claims \ref{claim: number of neighbors in other components} and
\ref{claim: number of neighbors in its component}, there exists a
vertex in $G$ with degree at most
$$
(k+1)(k^2-k+1)(2l-2k-1)+l-1=(k^3+1)(2l-2k-1)+l-1<\delta(G),
$$
a contradiction.
The proof of Theorem \ref{thm: connected k-factor} is complete.
\end{proof}

\begin{proof}[\bf{Proof of Theorem \ref{thm: k-factor}}]
Let $t=\max\{k,l,m\}$ and let $G$ be a connected $S_{k,l}$-free
balanced bipartite graph with minimum degree $\delta(G)\geqslant
2t^2$. Since $S_{k,l}$ is an induced subgraph of $S_{t,t}$, we have
that $G$ is $S_{t,t}$-free. From the Step 1 of the proof of Theorem
\ref{thm: connected k-factor}, 
we can see that $G$ contains a $t$-factor $F$. 
Note that $F$ is a $t$-regular bipartite graph and it contains an $s$-factor for all $s\leqslant t$. 
Thus $G$ contains an $m$-factor.
\end{proof}

\begin{proof}[\bf{Proof of Theorem \ref{thm: HC in S13 free}}]

Similar to the proof of Theorem \ref{thm: connected k-factor},
we first show that $G$ contains a 2-factor and then show that there exists a connected one.

Assume the opposite that $G$ contains no 2-factor.
By Theorem \ref{thm: ore-ryser},
there exists $A\subseteq X$ such that
\begin{equation}\label{Eq11}
2|A|>\sum_{y\in N(A)}\min\{2,d_A(y)\}.
\end{equation}
We choose $A$ as small as possible.
For $i\geqslant 0$,
set $A_i=\{v\in V(G): ~d(v,A)=i\}$.
Note that $A_0=A$ and $A_1=N(A)$.
We partite $A_1$ and $A_2$ into two parts.
Let
\begin{equation*}
\begin{split}
A_1^0=\{y\in A_1: d_{A_0}(y)=1\},\  & A_1^1=A_1\backslash A_1^0; \\
A_2^0=\{x\in A_2: d_{A_3}(x)=0\},\  & A_2^1=A_2\backslash A_2^0.
\end{split}
\end{equation*}

From (\ref{Eq11}),
one can easily get that $|A_0|>|A_1^1|$.
For every vertex $v\in A_i$, $i\geqslant 2$,
we claim that $d_{A_{i+1}}(v)\leqslant 2$.
If not, then let $u_1,u_2,u_3$ be three neighbors of $v$ in $A_{i+1}$,
let $u$ be a neighbor of $v$ in $A_{i-1}$,
and let $v_1$ be a neighbor of $u$ in $A_{i-2}$.
Then $\{v,v_1,u,u_1,u_2,u_3\}$
induces an $S_{1,3}$ of $G$, a contradiction. Thus
$$
d_{A_{i+1}}(v)\leqslant 2,~
d_{A_{i-1}}(v)\geqslant 2,~\mbox{for every vertex }v\in A_i,~i\geqslant 2.
$$
It follows that $|A_2^1|\geqslant |A_3|$ and
$|A_i|\geqslant |A_{i+1}|$ for each $i\geqslant 3$.

To get a contradiction,
we will show that $|A_2^0|\geqslant |A_1^0|$ in the
following.

Let $x\in A_0$ that has a neighbor $y$ in $A_1^0$. We claim that
every vertex in $N(x)\backslash\{y\}$ has three neighbors in $A_0$.
Suppose not. Let $A'=A\backslash\{x\}$ and
$$
2|A'|=2|A|-2>\sum_{y\in N(A)}\min\{2,d_A(y)\}-2\geqslant \sum_{y\in
N(A')}\min\{2,d_{A'}(y)\},
$$
contradicting the choice of $A$. Thus
we conclude that every vertex $x\in A_0$ has at most one neighbor in
$A_1^0$, and if $x$ has a neighbor in $A_1^0$, then each of its
other neighbors has at least three neighbors in $A_0$.

Let $y$ be an arbitrary vertex in $A_1^0$. We show that
$N_{A_2}(y)\subseteq A_2^0$. Let $x$ be the (unique) neighbor of $y$
in $A_0$ and let $y_1,y_2,y_3$ be three neighbors of $x$ other than
$y$. It follows that $y_i$ has three neighbors in $A_0$ for $i\in
\{1,2,3\}$. Let $x_1$ be a neighbor of $y$ in $A_2$. Then $x_1y_i\in
E(G)$ for some $i\in \{1,2,3\}$; otherwise $\{x,y,x_1,y_1,y_2,y_3\}$
induces an $S_{1,3}$. If $x_1$ has a neighbor $y'\in A_3$, then
$\{x,y_i,y'\}$, together with three neighbors of $y_i$ in $A_0$,
induces an $S_{1,3}$, a contradiction. Thus we conclude
$N_2(y)\subseteq A_2^0$. Specially, every vertex in $A_1^0$ has at
least three neighbors in $A_2^0$.

Let $x$ be an arbitrary vertex in $A_2^0$. We claim that $x$ has at most three neighbors in $A_1^0$.
Assume the opposite that  $y_1,\ldots,y_4$
are four neighbors in $N_{A_1^0}(x)$. Let $x_1$ be a neighbor of
$y_1$ in $A_0$. Recall that $x_1$ has no neighbors in $A_1^0$ other
than $y_1$. Thus $\{x,x_1,y_1,y_2,y_3,y_4\}$ induces an $S_{1,3}$, a
contradiction. Thus as we claimed, every vertex in $A_2^0$ has at
most three neighbors in $A_1^0$.
It follows that
$|A_2^0|\geqslant |A_1^0|$ and hence $|X|>|Y|$, contradicting the assumption that $G$ is
balanced.

Now we show that $G$ contains a connected 2-factor, i.e., a Hamilton cycle.
Let $F$ be a 2-factor of $G$ with minimum number of components. If
$F$ is connected, then it is a Hamilton cycle. So we assume that $F$
is disconnected. Note that every component of $F$ is a cycle.

\setcounter{claim}{0}
\begin{claim}\label{ClComponents}
For every vertex $u$ of $G$,
$N_{G}(u)$ is contained in at most two
components of $F$.
\end{claim}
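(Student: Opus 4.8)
The plan is to prove the contrapositive in spirit: assuming $N_G(u)$ meets at least three components of $F$, I will exhibit an induced $S_{1,3}$, contradicting that $G$ is $S_{1,3}$-free. First I record the analogue of the \emph{link property} (\ref{Eq10}) for the present $2$-factor $F$: if $xy$ is a link, then $E_G(N_F(x),N_F(y))=\emptyset$, since otherwise the single swap $(E(F)\setminus\{xx',yy'\})\cup\{xy,x'y'\}$ would give a $2$-factor with fewer components. Assume without loss of generality $u\in X$, and let $C_0$ be the component of $F$ containing $u$; as the two $F$-neighbours $p,q$ of $u$ lie in $C_0$, the component $C_0$ always meets $N_G(u)$. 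Hence if $N_G(u)$ meets at least three components, there are two further components $C_1\ne C_2$, both different from $C_0$, and neighbours $a_1\in V(C_1)\cap Y$, $a_2\in V(C_2)\cap Y$ of $u$; both $ua_1$ and $ua_2$ are links. Write $N_F(a_1)=\{b_1,b_1'\}$ and $N_F(a_2)=\{b_2,b_2'\}$, all lying in $X$.

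Next I build a candidate induced $S_{1,3}$ centred at $u$: take $u$ as the degree-$4$ centre with leaves $p,q,a_1$, take $a_2$ as the degree-$2$ vertex, and choose its pendant leaf $b$ from $N_F(a_2)=\{b_2,b_2'\}$. The edges $up,uq,ua_1,ua_2$ and $a_2b$ are all present. Since $p,q,a_1,a_2\in Y$, they are pairwise non-adjacent, and $a_2$ is non-adjacent to $p,q,a_1$; as $b\in X$ is non-adjacent to $u\in X$, the only possible spoiling edges are those from $b$ to the leaves $p,q,a_1$. Applying the link property to the link $ua_2$ yields $E_G(\{p,q\},\{b_2,b_2'\})=\emptyset$, so $b\not\sim p$ and $b\not\sim q$. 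Hence $\{u,p,q,a_1,a_2,b\}$ induces an $S_{1,3}$ as soon as some $b\in N_F(a_2)$ satisfies $b\not\sim a_1$. Symmetrically, swapping the roles of $a_1$ and $a_2$, the set $\{u,p,q,a_1,a_2,b'\}$ with $b'\in N_F(a_1)$ induces an $S_{1,3}$ whenever some $b'\in N_F(a_1)$ satisfies $b'\not\sim a_2$.

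The single remaining obstacle, and the crux of the argument, is the one adjacency the link property does not directly control: $b\sim a_1$ (respectively $b'\sim a_2$), where the pendant meets the leaf coming from the \emph{other} external component. I expect to resolve it by playing the two symmetric configurations against each other. If both fail, then every vertex of $N_F(a_2)$ is adjacent to $a_1$ and every vertex of $N_F(a_1)$ is adjacent to $a_2$; in particular $a_1\sim b_2$ and $b_1\sim a_2$. But $a_1b_2$ is a link, since its ends lie in the distinct components $C_1$ and $C_2$, so the link property gives $E_G(N_F(a_1),N_F(b_2))=\emptyset$; as $a_2\in N_F(b_2)$ and $b_1\in N_F(a_1)$, this forces $b_1\not\sim a_2$, contradicting $b_1\sim a_2$. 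Thus at least one of the two configurations yields an induced $S_{1,3}$, a contradiction, and therefore $N_G(u)$ is contained in at most two components of $F$. The only routine points left to check are that the six chosen vertices are always distinct, which follows from $C_0,C_1,C_2$ being distinct and every component of $F$ being a cycle of length at least $4$, and that $p,q,a_1,a_2$ attain exactly the prescribed degrees in the induced subgraph.
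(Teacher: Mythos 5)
Your proof is correct and takes essentially the same route as the paper: the same six-vertex set $\{u,p,q,a_1,a_2,b\}$ is forced to induce an $S_{1,3}$ unless every vertex of $N_F(a_2)$ is adjacent to $a_1$ (and symmetrically), after which both arguments use the same edge swap merging $C_1$ and $C_2$ --- you merely phrase that swap as the link property applied to the new link $a_1b_2$ rather than writing out the exchanged edge set. No gaps.
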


\begin{proof}
We finish the proof by contradiction.
Let $H_0,H_1,H_2$ be three components of $F$ such
that $u\in V(H_0)$ and $uv_i\in E(G)$ for $v_i\in V(H_i)$, $i=1,2$.
Let $N_F(u)=\{u',u''\}$ and $v'_i$ be a vertex in $N_F(v_i)$,
$i=1,2$. Clearly $u'v'_i,u''v'_i\notin E(G)$, otherwise we can get a
2-factor of $G$ with less components. If $v_1v'_2\notin E(G)$, then
$\{u,u',u'',v_1,v_2,v'_2\}$ induces an $S_{1,3}$, a contradiction.
Thus we conclude that $v_1v'_2\in E(G)$, and similarly, $v_2v'_1\in
E(G)$. It follows that
$(E(F)\backslash\{v_1v'_1,v_2v'_2\})\cup\{v_1v'_2,v_2v'_1\}$ induces
a 2-factor with less components, a contradiction.
\end{proof}

\begin{claim}\label{ClC4}
Every component of $F$ is a $4$-cycle $C_4$.
\end{claim}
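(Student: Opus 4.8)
The plan is to argue by contradiction: assume some component $H_0$ of $F$ is a cycle of length at least $6$ and exhibit an induced $S_{1,3}$. The engine of the whole argument is one structural remark coming from bipartiteness: since $G=(X,Y;E)$ is bipartite, the neighborhood of any vertex lies entirely in one part and is therefore an independent set. Hence, if we center a prospective star at a vertex $w$ of degree at least $4$, then any four neighbors of $w$ are automatically pairwise non-adjacent, so three of them may serve as the leaves $b_1,b_2,b_3$ and the fourth as the degree-$2$ vertex $c_1$ of an $S_{1,3}$. The only remaining requirement is to hang off $c_1$ a pendant $p$ adjacent to $c_1$ but to none of $w,b_1,b_2,b_3$; thus every step reduces to finding one good pendant.

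First I would fix a link $w_1v$ with $w_1\in V(H_0)$ and $v$ in another component $H_1$ (one exists since $G$ is connected and $F$ is disconnected), write $H_0=w_1w_2\cdots w_{2m}w_1$ with $m\geq 3$, and set $N_F(v)=\{v',v''\}$, so that $v',v''$ lie in the part opposite to $v$, i.e.\ the part of $w_1$. Because $\delta(G)\geq 4$ and $d_F(w_1)=2$, the vertex $w_1$ has a fourth neighbor $t$ besides $w_2,w_{2m},v$, and $t\in V(H_0)\cup V(H_1)$ by Claim \ref{ClComponents}. The natural attempt is the star centered at $w_1$ with $c_1=v$, leaves $w_2,w_{2m},t$, and pendant $v'$. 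Here almost every non-adjacency is free: the four neighbors $w_2,w_{2m},v,t$ of $w_1$ are independent since they all lie opposite $w_1$, the vertex $v$ is non-adjacent to the three leaves for the same reason, $w_1v'\notin E$ because $w_1$ and $v'$ share a part, and $v'w_2,v'w_{2m}\notin E$ by \eqref{Eq10} applied to the link $w_1v$. In fact the only edge that could spoil this configuration is $v't$; replacing the pendant $v'$ by $v''$, the only obstruction becomes $v''t$. Consequently, unless $t$ is adjacent to both $v'$ and $v''$, we already have an induced $S_{1,3}$ and are done.

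It remains to treat the residual situation $v't,v''t\in E$. In this case $w_1t$ is itself a link (or a chord whose far endpoint sends a link into $H_1$), so I would re-center the star at $t$, take $c_1=w_1$ with pendant $w_2$, and choose the three leaves from $\{v',v'',t_1,t_2\}$, where $N_F(t)=\{t_1,t_2\}$. One checks that all the dangerous pendant--leaf edges $w_2v',w_2v'',w_2t_1,w_2t_2$ now vanish by applying \eqref{Eq10} to the two links $w_1v$ and $w_1t$, while every other non-adjacency is again free from bipartiteness. This construction succeeds whenever $\{v',v''\}\cup\{t_1,t_2\}$ contains three distinct vertices.

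The genuinely hard kernel is exactly when it does not, i.e.\ when $N_F(t)=N_F(v)$; this forces $H_1$ to be a $C_4$ on which $v$ and $t$ are antipodal and $w_1$ is joined to both of its vertices in $w_1$'s own part. This is the one place where the hypothesis $|H_0|\geq 6$ must finally be spent: I would move to another vertex of $H_0$ (available precisely because the cycle is long) whose two $F$-neighbors differ from $\{w_2,w_{2m}\}$ and repeat the construction there, arranged so that the collapsed $C_4$ structure of $H_1$ can no longer absorb all three leaves. I expect this bookkeeping---propagating the ``both $F$-neighbors are hit'' obstruction around the long cycle and verifying that the six chosen vertices remain distinct---to be the main obstacle; everything else is forced automatically by bipartiteness, \eqref{Eq10}, and Claim \ref{ClComponents}. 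Once a contradiction is reached in every case, no component can have length exceeding $4$, and since each component of the $2$-factor $F$ is an even cycle, every component is a $C_4$.
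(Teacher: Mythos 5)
There is a genuine gap: your argument is not complete in precisely the cases you defer. Two specific problems. First, when the fourth neighbour $t$ of $w_1$ lies in $V(H_0)$ (so $w_1t$ is a chord of $H_0$, not a link), your second configuration needs $w_2t_1,w_2t_2\notin E(G)$, but you justify this by ``applying \eqref{Eq10} to the two links $w_1v$ and $w_1t$'' --- and $w_1t$ is not a link in this case, so \eqref{Eq10} says nothing; a chord from $w_2$ to a vertex of $N_F(t)\subseteq V(H_0)$ is entirely possible and nothing you have established rules it out. Second, the case you label the ``genuinely hard kernel'' ($t\in V(H_1)$ with $N_F(t)=N_F(v)$) is left as a plan (``I would move to another vertex\dots I expect this bookkeeping to be the main obstacle''), not a proof; and it is not a degenerate corner but essentially the generic situation, since nothing prevents every external neighbour of every vertex of $H_0$ from landing on a single $C_4$ in this doubly-attached way. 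As it stands the contradiction is only reached when $t$ happens to miss one of $v',v''$.

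It is worth comparing with the paper's route, because the configuration you are trying to escape is exactly the one the paper exploits. The paper first shows (using Claim \ref{ClComponents}, $\delta(G)\geqslant 4$ and one $S_{1,3}$ argument) that some vertex $u$ \emph{outside} the component $H$ under consideration has \emph{two} neighbours $v,v'$ in $H$. Then both $uv$ and $uv'$ are links, so \eqref{Eq10} kills \emph{all} edges between $N_F(u)$ and $N_H(v)\cup N_H(v')$ at once; if $H$ is not a $C_4$ one picks $v''\in N_H(v')\setminus N_H(v)$ and gets an induced $S_{1,3}$ whether $vv''$ is an edge or not (centred at $v$ with pendant path $u,u'$ in the first case, centred at $u$ with pendant path $v',v''$ in the second). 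Your residual case is precisely a vertex ($w_1$) with two neighbours ($v$ and $t$) in another component --- but aimed at $H_1$ rather than at the component you want to shrink. Rather than propagating the obstruction around the long cycle $H_0$, the efficient move is to establish the double attachment \emph{into} $H_0$ and let the two simultaneous applications of \eqref{Eq10} do the work.
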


\begin{proof}
Let $H$ be a component of $F$. Since $G$ is connected, there is some
edges between $H$ and $H'$, where $H'$ is a component of $F$ other
than $H$.

We first show that there is a vertex in $H'$ has at least two
neighbors in $H$. Let $u\in V(H')$, $v\in V(H)$ with $uv\in E(G)$.
If $d_H(u)=1$,
then, by Claim \ref{ClComponents} and the fact
$\delta(G)\geqslant 4$,
we get that $u$ has a neighbor $u'\in V(H')\backslash N_F(u)$.
If there is a vertex $v'\in N_F(v)$ which is  nonadjacent to $u'$,
then $\{u,u',v,v'\}\cup N_F(u)$ induces an $S_{1,3}$, a contradiction.
This implies that $v'$ is adjacent to both vertices in $N_F(v)$.
In any case there exists a vertex in $H'$ which has at least two neighbors in $H$.

Now let $u$ be a vertex in $H'$ which has at least two neighbors $v,v'$ in $H$.
Then $E(N_{H'}(u),N_H(v)\cup N_H(v'))=\emptyset$.
Suppose that $H$ is not a $C_4$,
then $N_H(v')\backslash
N_H(v)\neq\emptyset$.
Let $v''$ be a vertex in $N_H(v')\backslash
N_H(v)$. If $vv''\in E(G)$, letting $u'\in N_{H'}(u)$, then
$\{u,u',v,v''\}\cup N_H(v)$ induces an $S_{1,3}$; if $vv''\notin
E(G)$, then $\{u,v,v',v''\}\cup N_{H'}(u)$ induces an $S_{1,3}$,
both yield a contradiction.
Thus we conclude that $H$ is a $C_4$.
\end{proof}

By Claims \ref{ClComponents} and \ref{ClC4}, if there exists an edge $xy$ with $x\in X\cap V(H_1)$ and $y\in Y\cap V(H_2)$, 
then every vertex in $X\cap V(H_1)$ is adjacent to all vertices in $Y\cap V(H_2)$. 
This implies that there is no edge between $Y\cap V(H_1)$ and  $X\cap V(H_2)$; otherwise we can get
a 2-factor with less components.
It therefore follows that $G$ is the double
graph of a cycle, which is Hamiltonian
(The {\em double graph} of a given graph $R$ is constructed by making two copies of $R$,
say $R$ and $R'$,
including the initial edge set of each graph,
and adding edges $uv'$ and $u'v$ for every edge $uv$ of $R$).
The proof of Theorem \ref{thm: HC in S13 free} is complete.
\end{proof}

\section{Concluding remarks}

In 1994,
Flandrin, Fouquet and Li \cite{FFL1994} proposed the following conjecture.

\begin{conjecture}[Flandrin, Fouquet and Li \cite{FFL1994}]\label{conj: FFL1994}
There exists a constant $d$ such that
every connected balanced $S_{3,3}$-free bipartite graph with minimum degree at least $d$ contains a Hamilton cycle.
\end{conjecture}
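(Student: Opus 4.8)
The plan is to follow the two-step template of Theorems \ref{thm: connected k-factor} and \ref{thm: HC in S13 free}: first secure a $2$-factor, then argue that a $2$-factor with the fewest components must in fact be a single Hamilton cycle. For the first step there is nothing new to do: applying Theorem \ref{thm: k-factor} with the forbidden graph $S_{3,3}$ (so $k=l=3$) and $m=2$ shows that every balanced $S_{3,3}$-free bipartite graph with $\delta(G)\geqslant 18$ already contains a $2$-factor, which is automatically a disjoint union of even cycles. Hence the entire content of the conjecture lies in the connectivity step, and I would fix $d$ to be a sufficiently large absolute constant and choose a $2$-factor $F$ minimizing the number of components, seeking a contradiction with the assumption that $F$ is disconnected.

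First I would reinstall the swap dichotomy from Step 2 of Theorem \ref{thm: connected k-factor}. Since $G$ is connected but $F$ is not, there is a \emph{link}, an edge $uv\in E(G)$ joining two distinct cycles of $F$; the minimality of $F$ then forbids any crossing edge that would let us reroute the two cycles into one, so for every link $uv$ we may assume $E_G(N_F(u),N_F(v))=\emptyset$. The aim is to push this family of non-adjacency constraints, through the $S_{3,3}$-free hypothesis, into two constant upper bounds: one on the number of neighbours a vertex can have outside its own cycle, and one on the number inside its own cycle, exactly in the spirit of Claims \ref{claim: number of neighbors in other components} and \ref{claim: number of neighbors in its component}. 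If both can be bounded by an absolute constant, summing them produces a vertex of degree below $d$, the desired contradiction; so the technical heart is to reprove those two claims with $k=l=3$, using the colour-classes-by-shared-$F$-neighbour device of Claim \ref{claim: number of neighbors in other components} to convert the link constraints into forced induced copies of $S_{3,3}$.

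Carrying this out is exactly where the difficulty concentrates, and it is why I expect this step to be the main obstacle rather than a routine transfer. In the $S_{1,3}$ proof one obtains extremely rigid conclusions, namely that a neighbourhood meets at most two components and that every component is a $C_4$ (Claims \ref{ClComponents} and \ref{ClC4}), which collapse $G$ to a double graph of a cycle; these are available precisely because forcing an $S_{1,3}$ costs only a single private neighbour on one side. With $S_{3,3}$ each centre has degree $4$ and carries three leaves, so exhibiting a forbidden configuration now requires three pairwise- and mutually-independent private neighbours on each side simultaneously, which is far harder to arrange. Consequently the $C_4$-rigidity disappears: components may have arbitrary even length, a vertex may send edges to super-constantly many cycles, and the counting must rule out the configuration in which a vertex distributes its external neighbours thinly across many components so that no single cycle ever accumulates enough structure to trigger an $S_{3,3}$. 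Replacing the clean structure theorem by a robust counting argument that still yields a constant external-degree bound despite this freedom, and simultaneously controlling interactions between long cycles, is the crux; it is precisely this obstruction that has kept the conjecture open since \cite{FFL1994}.
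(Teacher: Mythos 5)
This statement is an open conjecture; the paper does not prove it, and neither does your proposal. Your Step 1 is fine: Theorem \ref{thm: k-factor} with $k=l=3$, $m=2$ gives a $2$-factor once $\delta(G)\geqslant 18$, which is exactly what the paper records in Corollary \ref{corollary: 3-factor}. But everything after that is a plan rather than an argument, and you say so yourself in the final sentence. A "proof" whose central step is described as "the crux" and "the main obstacle" is a gap by definition, so the proposal cannot be accepted as a proof of the conjecture.

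It is worth naming concretely why the transfer of Claims \ref{claim: number of neighbors in other components} and \ref{claim: number of neighbors in its component} fails, beyond the general difficulty you gesture at. In Step 2 of Theorem \ref{thm: connected k-factor} the factor $F$ is a \emph{$k$-factor for the same $k$ that appears in $S_{k,l}$}: every construction of an induced $S_{k,l}$ there uses the set $N_F(y)$ of a vertex $y$, of size exactly $k$, as the $k$ leaves attached to one centre of the star. For the conjecture you need $F$ to be a $2$-factor while the forbidden graph is $S_{3,3}$, so $|N_F(y)|=2<3$ and the $F$-neighbourhood no longer supplies enough leaves to close any of those contradictions; the relation (\ref{Eq10}) still holds, but it can no longer be leveraged into an induced $S_{3,3}$. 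This is precisely why the paper's machinery yields a \emph{connected $3$-factor} for $S_{3,3}$-free graphs (Corollary \ref{corollary: 3-factor}) but not a Hamilton cycle, and why the conjecture remains open. Your write-up should either produce a genuinely new mechanism for forcing an $S_{3,3}$ from a minimal disconnected $2$-factor, or acknowledge that it establishes only the $2$-factor half, which is already in the paper.
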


The only known result for Conjecture \ref{conj: FFL1994} was obtained by Flandrin, Fouquet and Li \cite{FFL1994}
who showed that
every connected balanced $S_{3,3}$-free bipartite graph with minimum degree at least $ \max\{9,\frac{n+14}{6}\}$ contains a Hamilton cycle,
where $n$ is the order of the graph.
Taking (i) $k=2$, $l=3$, (ii) $k=l=3$ in Theorems \ref{thm: connected k-factor} and \ref{thm: k-factor},
we obtain the following two results corresponding to Conjecture \ref{conj: FFL1994}.

\begin{corollary}\label{corollary: HC}
Every connected balanced $S_{2,3}$-free bipartite graph with minimum degree at least 12 contains a Hamilton cycle.
\end{corollary}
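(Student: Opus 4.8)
The plan is to derive this as an immediate specialization of Theorem~\ref{thm: connected k-factor}, with no independent argument required. First I would recall the observation made in the introduction that a connected $2$-factor is precisely a Hamilton cycle; hence it suffices to guarantee a connected $2$-factor in the given graph. I would then invoke Theorem~\ref{thm: connected k-factor} with the parameters $k=2$ and $l=3$, which are admissible since they satisfy $2\leqslant k\leqslant l$. The theorem asserts the existence of a connected $k$-factor once the minimum degree meets the explicit threshold $c=c(k,l)$, so the whole task reduces to evaluating that threshold at $(k,l)=(2,3)$ and checking it equals the constant $12$ claimed in the statement.

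The remaining work is purely arithmetic. Substituting into the first term of the maximum gives $(k^{3}+1)(2l-2k-1)+l=(2^{3}+1)(2\cdot 3-2\cdot 2-1)+3=9\cdot 1+3=12$, while the second term gives $2(k^{2}-k+l)=2(4-2+3)=10$. Therefore $c(2,3)=\max\{12,10\}=12$, so every connected balanced $S_{2,3}$-free bipartite graph with minimum degree at least $12$ contains a connected $2$-factor, that is, a Hamilton cycle. I expect no genuine obstacle here: the entire substance lies in Theorem~\ref{thm: connected k-factor}, and the sharp value $12$ is controlled by the first term in the maximum, the second term being strictly smaller. The only point meriting a moment's care is confirming that the smaller degree bound coming from Step~1 of that theorem's proof (which secures a $2$-factor but not necessarily a connected one) does not suffice on its own, so that the full constant from the connected case is indeed what is needed.
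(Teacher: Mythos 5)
Your proposal is correct and matches the paper exactly: the corollary is obtained by setting $k=2$, $l=3$ in Theorem~\ref{thm: connected k-factor}, noting that a connected $2$-factor is a Hamilton cycle, and computing $c(2,3)=\max\{9\cdot 1+3,\,10\}=12$. The paper offers no further argument, so your specialization is precisely its intended proof.
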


\begin{corollary}\label{corollary: 3-factor}
Every connected balanced $S_{3,3}$-free bipartite graph with minimum
degree at least 18 contains a 2-factor and a connected 3-factor.
\end{corollary}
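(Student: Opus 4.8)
The plan is to obtain both conclusions by directly specializing the two main theorems to the parameters $k=l=3$ (and, for the 2-factor, additionally $m=2$), so that the only substantive work is verifying that the resulting degree thresholds both equal $18$.

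First I would handle the connected 3-factor by invoking Theorem \ref{thm: connected k-factor} with $k=l=3$. The constant supplied by that theorem becomes
$$
c=\max\left\{(k^{3}+1)(2l-2k-1)+l,~2(k^{2}-k+l)\right\}=\max\{(27+1)(-1)+3,~2\cdot 9\}=\max\{-25,18\}=18.
$$
Hence every connected balanced $S_{3,3}$-free bipartite graph with minimum degree at least $18$ already satisfies the hypothesis of Theorem \ref{thm: connected k-factor} for $k=3$, and therefore contains a connected 3-factor.

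Next I would handle the 2-factor by invoking Theorem \ref{thm: k-factor} with $k=l=3$ and $m=2$. Here $t=\max\{k,l,m\}=\max\{3,3,2\}=3$, so the threshold prescribed by that theorem is $c'=2t^{2}=2\cdot 9=18$. Thus the same minimum-degree hypothesis guarantees the existence of a $2$-factor via Theorem \ref{thm: k-factor}.

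Since the two thresholds coincide at the value $18$, a single hypothesis of minimum degree at least $18$ is enough to apply both theorems at once, yielding both a 2-factor and a connected 3-factor. I do not expect any genuine obstacle in this argument: it is a routine specialization, and the only point requiring care is the arithmetic check that the $k=l=3$ substitution makes the first term $(k^{3}+1)(2l-2k-1)+l$ in the bound of Theorem \ref{thm: connected k-factor} negative, so that the maximum is attained by the second term $2(k^{2}-k+l)=18$, matching the bound $2t^{2}=18$ coming from Theorem \ref{thm: k-factor}.
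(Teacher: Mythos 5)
Your proposal is correct and is exactly the paper's own (one-line) derivation: the authors obtain this corollary by specializing Theorem \ref{thm: connected k-factor} and Theorem \ref{thm: k-factor} at $k=l=3$ (and $m=2$), and your arithmetic check that both thresholds reduce to $18$ --- because $(k^3+1)(2l-2k-1)+l=-25$ is dominated by $2(k^2-k+l)=18$, which coincides with $2t^2=18$ --- is the only content needed.
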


At the end of this note, we give more information on $S_{k,l}$-free bipartite graphs. It is not difficult to show that  a connected bipartite graph is $S_{1,1}$-free if and only if it is a complete bipartite graph. 
We can also show the following proposition. One can compare it with the fact that every connected $K_{1,3}$- and
$Z_1$-free graph is a path, or a cycle, or a complete graph minus the
edges of a matching, where $Z_1$ is the graph obtained from a
triangle by adding a pendant edge.

\begin{proposition}
A connected bipartite graph $G$ is $S_{1,2}$-free if and only if $G$
is a path, or an even cycle, or can be obtained from a complete
bipartite graph by removing a matching.
\end{proposition}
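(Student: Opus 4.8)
The plan is to prove both directions, with the forward (necessity) direction being the substantial one, and to phrase everything in terms of the bipartition $(X,Y)$ of $G$.

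For sufficiency I would first observe that $S_{1,2}$ has a vertex of degree $3$, so it cannot occur as an induced subgraph of any graph of maximum degree at most $2$; this disposes of paths and even cycles at once. For a graph $G=K_{m,n}-M$ obtained from a complete bipartite graph by deleting a matching $M$, I would argue by contradiction: write the vertices of an induced $S_{1,2}$ as $a,b,c,d,e$ with edges $ab,ad,dc,de$, so that $d$ has degree $3$ and $b$ is a leaf. The bipartition forces $d$ and $b$ onto one side and the three leaves $a,c,e$ onto the other, whence the only non-edges of $S_{1,2}$ crossing the bipartition are $bc$ and $be$. Since the cross non-edges of $G$ are exactly the edges of $M$, this places two edges of $M$ at the single vertex $b$, contradicting that $M$ is a matching.

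For necessity I would split on $\Delta(G)$. If $\Delta(G)\le 2$ then the connected bipartite graph $G$ is a path or an even cycle, so assume there is a vertex $v$ with $|N(v)|\ge 3$; write $A=N(v)$. The heart of the argument is a short chain of ``neighbourhood domination'' claims, each proved by exhibiting a forbidden $S_{1,2}$. First, any vertex on $v$'s side having a neighbour in $A$ misses at most one vertex of $A$ (else $v$, that neighbour, and two missed vertices of $A$ induce an $S_{1,2}$ centred at $v$). A symmetric argument shows every vertex of $A$ misses at most one vertex of $N(A)$, so the bipartite complement restricted to $A\times N(A)$ already has maximum degree at most $1$. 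I would then promote this to global structure by a distance argument: letting $L_i$ be the set of vertices at distance $i$ from $v$, I would show there is no vertex at distance $4$ (a putative distance-$4$ vertex, together with a vertex of $A$ and an intermediate vertex, yields an induced $S_{1,2}$), so $V(G)=L_0\cup L_1\cup L_2\cup L_3$ and in particular $N(A)$ is all of $v$'s side. Finally I would control the last layer, showing that every vertex of $L_3$ is adjacent to all of $L_2$ (hence misses only $v$) and that $|L_3|\le 1$, again by producing induced copies of $S_{1,2}$. Combining these facts, every vertex misses at most one vertex on the opposite side, so the bipartite complement of $G$ is a matching $M$ and $G=K_{m,n}-M$.

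The main obstacle I anticipate is precisely this passage from local to global. The domination claims are clean near the star at $v$, but a vertex far from $v$ (one in $L_3$) need not see $A$ at all a priori, so one must choose the five vertices of each induced $S_{1,2}$ carefully—typically centring the copy at a well-chosen vertex of $A$ or of $L_2$ and using $v$ itself as the pendant—to force $L_3$ to be adjacent to all of $L_2$ and to have size at most one. Making these vertex choices simultaneously consistent with the bipartition and genuinely induced is where the care lies; the boundary cases ($L_2$ or $L_3$ empty, or $|A|=3$) are routine.
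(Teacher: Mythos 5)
Your proof is correct, but it takes a genuinely different route from the paper's. The paper argues by contradiction: assuming $G$ is none of the three types, it first shows that no induced path on four vertices can have all its vertices of degree $2$, then that $G$ has diameter at most $4$, and finally picks a vertex $u$ with two non-neighbours $v_1,v_2$ in the opposite partite set (these exist because $G$ is not a complete bipartite graph minus a matching), takes two shortest $u$--$v_i$ paths minimizing $|E(P_1)\cup E(P_2)|$, and reaches a contradiction by a case analysis on where the two paths diverge. You instead split on the maximum degree and, when some vertex $v$ has degree at least $3$, run a BFS-layer argument from $v$: each vertex of $N(N(v))$ misses at most one vertex of $A=N(v)$ and vice versa, there is no vertex at distance $4$, and the third layer is completely joined to the second and has at most one vertex, so the bipartite complement has maximum degree at most $1$. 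Your version is more constructive and avoids the paper's two-path case analysis; the paper's version avoids your layer bookkeeping. I checked each of your claimed induced copies of $S_{1,2}$ and they all exist as described. One spot to tighten when writing it up: the claim that each $w\in A$ misses at most one vertex of $N(A)$ is not obtained by a literally symmetric argument, since no vertex is known a priori to dominate all of $N(A)$ the way $v$ dominates $A$. Instead, if $w$ misses $u_1,u_2\in N(A)\setminus\{v\}$, your first claim gives that each $u_i$ is adjacent to all of $A\setminus\{w\}$, and then any $w'\in A\setminus\{w\}$ together with $v,w,u_1,u_2$ induces an $S_{1,2}$ centred at $w'$. With that adjustment, and the routine boundary cases you defer, the argument is complete.
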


\begin{proof}
The sufficiency is clear.
So we prove the necessity of the assertion.
We assume that $G$ is $S_{1,2}$-free but it is not a path, not an even cycle,  and cannot be
obtained from a complete bipartite graph by removing a matching.

\setcounter{claim}{0}
\begin{claim}\label{ClDegree2}
If $u_1u_2u_3u_4$ is an induced path of $G$,
then $d(u_i)\neq 2$ for some
$i\in \{1,2,3,4\}$.
\end{claim}

\begin{proof}
Assume the opposite that $d(u_i)=2$ for each $i\in \{1,2,3,4\}$.
Let $P=v_0v_1v_2\cdots v_k$ be a longest path with all internal vertices
of degree 2. Then $k\geqslant 5$. Assume w.l.o.g. that
$d(v_0)\leqslant d(v_k)$. If $d(v_1)=d(v_k)=1$, then $G$ is a path,
a contradiction. If $d(v_p)=2$, then $N(v_p)=\{v_{k-1},v_0\}$, since
otherwise there will be a path longer than $P$ with all internal
vertices of degree 2. It follows that $N(v_0)=\{v_1,v_k\}$ and $G$
is a cycle, a contradiction. So we assume that $d(v_k)\geq 3$. Let
$w_1,w_2\in N(v_k)\backslash\{v_{k-1}\}$. Then
$\{v_{k-2},v_{k-1},v_k,w_1,w_2\}$ induces an $S_{1,2}$ of $G$, a contradiction.
\end{proof}

\begin{claim}\label{ClDistance}
The distance of any two vertices in $G$ is at most $4$.
\end{claim}

\begin{proof}
Assume the opposite that there exist two vertices $u,v$ with distance $d(u,v)=5$.
Let $P=v_0v_1\cdots v_5$ be a shortest path between $u=v_0$ and $v=v_5$.
If $v_2$ has a neighbor $w$ other than $v_1,v_3$,
 then either
$wv_0\notin E(G)$ or $wv_4\notin E(G)$;
since otherwise, $d(u,v)\leqslant 3$.
It follows that either $\{v_0,v_1,v_2,v_3,w\}$ or
$\{v_1,v_2,v_3,v_4,w\}$ induces an $S_{1,2}$, a contradiction. Thus
we conclude that $d(v_2)=2$ and similarly $d(v_3)=2$. If $v_1$ has a
neighbor $w'$ other than $v_0,v_3$, then $\{v_0,v_1,v_2,v_3,w'\}$
induces an $S_{1,2}$, a contradiction. Thus we conclude that
$d(v_1)=2$ and similarly $d(v_4)=2$. It follows that $v_1\cdots v_4$
is an induced path with all vertices of degree 2, contradicting
Claim \ref{ClDegree2}.
\end{proof}

Recall that $G$ is not a complete bipartite graph minus the edges of a matching,
then there exist vertices $u,v_1,v_2$ such that
$uv_1,uv_2\notin E(G)$, and $u$ is contained in the partite set different from the one containing $v_1,v_2$.
By Claim \ref{ClDistance}, $d(u,v_i)=3$
for $i\in \{1,2\}$. Let $P_1=a_0a_1a_2a_3$ and $P_2=b_0b_1b_2b_3$ be
two shortest paths between $u$ and $v_1,v_2$, respectively, where $u=a_0=b_0$,
$v_1=a_3$ and $v_2=b_3$. Subject to all above conditions, we take
$P_1,P_2$ such that $E(P_1)\cup E(P_2)$ is as small as possible.

Note that there exists a maximum $j$ such that $a_i=b_i$ for all
$i\leqslant j$ (possibly $j=0$).
By our assumption that $E(P_1)\cup
E(P_2)$ is as small as possible, we have that $\{a_{j+1},\ldots,a_3\}$ and
$\{b_{j+1},\ldots,b_3\}$ are disjoint. If $j=2$, then
$\{a_0,a_1,a_2,a_3,b_3\}$ induces an $S_{1,2}$, a contradiction. If
$j=1$, then $a_2b_3\notin E(G)$;
since otherwise $P'_2=a_0a_1a_2b_3$
is a shortest path between $u$ and $v_2$ such that $E(P_1)\cup
E(P'_2)$ is smaller that $E(P_1)\cup E(P_2)$. It follows that
$\{v_0,v_1,v_2,b_2,b_3\}$ induces an $S_{1,2}$, a contradiction.
Suppose now that $j=0$.

If $a_0$ has a neighbor $w$ other that $a_1,b_1$, then either
$wa_2\notin E(G)$ or $wb_2\notin E(G)$; otherwise we can find a
shortest path $P'_i$ between $u$ and $v_i$, $i\in \{1,2\}$, such that
$E(P'_1)\cup E(P'_2)$ is smaller that $E(P_1)\cup E(P_2)$. It
follows that either $\{a_0,a_1,a_2,b_1,w\}$ or
$\{a_0,a_1,b_1,b_2,w\}$ induces an $S_{1,2}$, a contradiction. Thus
we conclude that $d(a_0)=2$. Suppose that $a_1$ has a neighbor $w'$
other than $a_0,a_3$. Clearly $w'\neq b_2$. We have $b_1w'\in E(G)$;
otherwise $\{a_0,a_1,a_2,b_1,w'\}$ induces an $S_{1,2}$. Thus either
$w'a_3\notin E(G)$ or $w'b_3\notin E(G)$; otherwise we can find a
shortest path $P'_i$ between $u$ and $v_i$, $i=1,2$, such that
$E(P'_1)\cup E(P'_2)$ is smaller that $E(P_1)\cup E(P_2)$. It
follows that either $\{a_0,a_1,a_2,b_1,w'\}$ or
$\{a_0,b_1,b_2,b_3,w'\}$ induces an $S_{1,2}$, a contradiction. This
implies that $d(a_1)=2$ and similarly $d(b_1)=2$.
By Claim \ref{ClDegree2},
the vertex $a_2$ has a neighbor $w''$ other than $a_1,a_3$. It
follows that $\{a_0,a_1,a_2,a_3,w''\}$ induces an $S_{1,2}$, a
contradiction.
\end{proof}

\end{spacing}
\end{document}